\newcommand*{\affaddr}[1]{#1} 
\newcommand*{\affmark}[1][*]{\textsuperscript{#1}}
\font\tenmsx=msam10
\font\sevenmsx=msam7 \font\fivemsx=msam5 \font\tenmsy=msbm10
\font\sevenmsy=msbm7 \font\fivemsy=msbm5
\def\hexnumber@#1{\ifnum#1<10 \number#1\else
\ifnum#1=10 A\else\ifnum#1=11 B\else\ifnum#1=12 C\else \ifnum#1=13
D\else\ifnum#1=14 E\else\ifnum#1=15 F\fi\fi\fi\fi\fi\fi\fi}
\def\msx@{\hexnumber@\msxfam}
\def\msy@{\hexnumber@\msyfam}
\mathchardef\boxdot="2\msx@00 \mathchardef\boxplus="2\msx@01
\mathchardef\boxtimes="2\msx@02 \mathchardef\square="0\msx@03
\mathchardef\blacksquare="0\msx@04
\mathchardef\centerdot="2\msx@05 \mathchardef\lozenge="0\msx@06
\mathchardef\blacklozenge="0\msx@07
\mathchardef\circlearrowright="3\msx@08
\mathchardef\circlearrowleft="3\msx@09
\mathchardef\rightleftharpoons="3\msx@0A
\mathchardef\leftrightharpoons="3\msx@0B
\mathchardef\boxminus="2\msx@0C \mathchardef\Vdash="3\msx@0D
\mathchardef\Vvdash="3\msx@0E \mathchardef\vDash="3\msx@0F
\mathchardef\twoheadrightarrow="3\msx@10
\mathchardef\twoheadleftarrow="3\msx@11
\mathchardef\leftleftarrows="3\msx@12
\mathchardef\rightrightarrows="3\msx@13
\mathchardef\upuparrows="3\msx@14
\mathchardef\downdownarrows="3\msx@15
\mathchardef\upharpoonright="3\msx@16
\mathchardef\downharpoonright="3\msx@17
\mathchardef\upharpoonleft="3\msx@18
\mathchardef\downharpoonleft="3\msx@19
\mathchardef\rightarrowtail="3\msx@1A
\mathchardef\leftarrowtail="3\msx@1B
\mathchardef\leftrightarrows="3\msx@1C
\mathchardef\rightleftarrows="3\msx@1D \mathchardef\Lsh="3\msx@1E
\mathchardef\Rsh="3\msx@1F \mathchardef\rightsquigarrow="3\msx@20
\mathchardef\leftrightsquigarrow="3\msx@21
\mathchardef\looparrowleft="3\msx@22
\mathchardef\looparrowright="3\msx@23
\mathchardef\circeq="3\msx@24 \mathchardef\succsim="3\msx@25
\mathchardef\gtrsim="3\msx@26 \mathchardef\gtrapprox="3\msx@27
\mathchardef\multimap="3\msx@28 \mathchardef\therefore="3\msx@29
\mathchardef\because="3\msx@2A \mathchardef\doteqdot="3\msx@2B
\mathchardef\triangleq="3\msx@2C \mathchardef\precsim="3\msx@2D
\mathchardef\lesssim="3\msx@2E \mathchardef\lessapprox="3\msx@2F
\mathchardef\eqslantless="3\msx@30
\mathchardef\eqslantgtr="3\msx@31
\mathchardef\curlyeqprec="3\msx@32
\mathchardef\curlyeqsucc="3\msx@33
\mathchardef\preccurlyeq="3\msx@34 \mathchardef\leqq="3\msx@35
\mathchardef\leqslant="3\msx@36 \mathchardef\lessgtr="3\msx@37
\mathchardef\backprime="0\msx@38
\mathchardef\risingdotseq="3\msx@3A
\mathchardef\fallingdotseq="3\msx@3B
\mathchardef\succcurlyeq="3\msx@3C \mathchardef\geqq="3\msx@3D
\mathchardef\geqslant="3\msx@3E \mathchardef\gtrless="3\msx@3F
\mathchardef\sqsubset="3\msx@40 \mathchardef\sqsupset="3\msx@41
\mathchardef\vartriangleright="3\msx@42
\mathchardef\vartriangleleft="3\msx@43
\mathchardef\trianglerighteq="3\msx@44
\mathchardef\trianglelefteq="3\msx@45
\mathchardef\bigstar="0\msx@46 \mathchardef\between="3\msx@47
\mathchardef\blacktriangledown="0\msx@48
\mathchardef\blacktriangleright="3\msx@49
\mathchardef\blacktriangleleft="3\msx@4A
\mathchardef\vartriangle="3\msx@4D
\mathchardef\blacktriangle="0\msx@4E
\mathchardef\triangledown="0\msx@4F \mathchardef\eqcirc="3\msx@50
\mathchardef\lesseqgtr="3\msx@51 \mathchardef\gtreqless="3\msx@52
\mathchardef\lesseqqgtr="3\msx@53
\mathchardef\gtreqqless="3\msx@54
\mathchardef\Rrightarrow="3\msx@56
\mathchardef\Lleftarrow="3\msx@57 \mathchardef\veebar="2\msx@59
\mathchardef\barwedge="2\msx@5A
\mathchardef\doublebarwedge="2\msx@5B \mathchardef\angle="0\msx@5C
\mathchardef\measuredangle="0\msx@5D
\mathchardef\sphericalangle="0\msx@5E
\mathchardef\varpropto="3\msx@5F \mathchardef\smallsmile="3\msx@60
\mathchardef\smallfrown="3\msx@61 \mathchardef\Subset="3\msx@62
\mathchardef\Supset="3\msx@63 \mathchardef\Cup="2\msx@64
\mathchardef\Cap="2\msx@65
\mathchardef\curlywedge="2\msx@66 \mathchardef\curlyvee="2\msx@67
\mathchardef\leftthreetimes="2\msx@68
\mathchardef\rightthreetimes="2\msx@69
\mathchardef\subseteqq="3\msx@6A \mathchardef\supseteqq="3\msx@6B
\mathchardef\bumpeq="3\msx@6C \mathchardef\Bumpeq="3\msx@6D
\mathchardef\lll="3\msx@6E
\mathchardef\ggg="3\msx@6F
\mathchardef\circledS="0\msx@73 \mathchardef\pitchfork="3\msx@74
\mathchardef\dotplus="2\msx@75 \mathchardef\backsim="3\msx@76
\mathchardef\backsimeq="3\msx@77 \mathchardef\complement="0\msx@7B
\mathchardef\intercal="2\msx@7C \mathchardef\circledcirc="2\msx@7D
\mathchardef\circledast="2\msx@7E
\mathchardef\circleddash="2\msx@7F
\def\ulcorner{\delimiter"4\msx@70\msx@70 }
\def\urcorner{\delimiter"5\msx@71\msx@71 }
\def\llcorner{\delimiter"4\msx@78\msx@78 }
\def\lrcorner{\delimiter"5\msx@79\msx@79 }
\def\yen{\mathhexbox\msx@55 }
\def\checkmark{\mathhexbox\msx@58 }
\def\circledR{\mathhexbox\msx@72 }
\def\maltese{\mathhexbox\msx@7A }
\mathchardef\lvertneqq="3\msy@00 \mathchardef\gvertneqq="3\msy@01
\mathchardef\nleq="3\msy@02 \mathchardef\ngeq="3\msy@03
\mathchardef\nless="3\msy@04 \mathchardef\ngtr="3\msy@05
\mathchardef\nprec="3\msy@06 \mathchardef\nsucc="3\msy@07
\mathchardef\lneqq="3\msy@08 \mathchardef\gneqq="3\msy@09
\mathchardef\nleqslant="3\msy@0A \mathchardef\ngeqslant="3\msy@0B
 \mathchardef\lneq="3\msy@0C \mathchardef\gneq="3\msy@0D
\mathchardef\npreceq="3\msy@0E \mathchardef\nsucceq="3\msy@0F
\mathchardef\precnsim="3\msy@10 \mathchardef\succnsim="3\msy@11
\mathchardef\lnsim="3\msy@12 \mathchardef\gnsim="3\msy@13
\mathchardef\nleqq="3\msy@14 \mathchardef\ngeqq="3\msy@15
\mathchardef\precneqq="3\msy@16 \mathchardef\succneqq="3\msy@17
\mathchardef\precnapprox="3\msy@18
\mathchardef\succnapprox="3\msy@19 \mathchardef\lnapprox="3\msy@1A
\mathchardef\gnapprox="3\msy@1B \mathchardef\nsim="3\msy@1C
\mathchardef\napprox="3\msy@1D \mathchardef\varsubsetneq="3\msy@20
\mathchardef\varsupsetneq="3\msy@21
\mathchardef\nsubseteqq="3\msy@22
\mathchardef\nsupseteqq="3\msy@23
\mathchardef\subsetneqq="3\msy@24
\mathchardef\supsetneqq="3\msy@25
\mathchardef\varsubsetneqq="3\msy@26
\mathchardef\varsupsetneqq="3\msy@27
\mathchardef\subsetneq="3\msy@28 \mathchardef\supsetneq="3\msy@29
\mathchardef\nsubseteq="3\msy@2A \mathchardef\nsupseteq="3\msy@2B
\mathchardef\nparallel="3\msy@2C \mathchardef\nmid="3\msy@2D
\mathchardef\nshortmid="3\msy@2E
\mathchardef\nshortparallel="3\msy@2F
\mathchardef\nvdash="3\msy@30 \mathchardef\nVdash="3\msy@31
\mathchardef\nvDash="3\msy@32 \mathchardef\nVDash="3\msy@33
\mathchardef\ntrianglerighteq="3\msy@34
\mathchardef\ntrianglelefteq="3\msy@35
\mathchardef\ntriangleleft="3\msy@36
\mathchardef\ntriangleright="3\msy@37
\mathchardef\nleftarrow="3\msy@38
\mathchardef\nrightarrow="3\msy@39
\mathchardef\nLeftarrow="3\msy@3A
\mathchardef\nRightarrow="3\msy@3B
\mathchardef\nLeftrightarrow="3\msy@3C
\mathchardef\nleftrightarrow="3\msy@3D
\mathchardef\divideontimes="2\msy@3E
\mathchardef\varnothing="0\msy@3F \mathchardef\nexists="0\msy@40
\mathchardef\mho="0\msy@66 \mathchardef\thorn="0\msy@67
\mathchardef\beth="0\msy@69 \mathchardef\gimel="0\msy@6A
\mathchardef\daleth="0\msy@6B \mathchardef\lessdot="3\msy@6C
\mathchardef\gtrdot="3\msy@6D \mathchardef\ltimes="2\msy@6E
\mathchardef\rtimes="2\msy@6F \mathchardef\shortmid="3\msy@70
\mathchardef\shortparallel="3\msy@71
\mathchardef\smallsetminus="2\msy@72
\mathchardef\thicksim="3\msy@73 \mathchardef\thickapprox="3\msy@74
\mathchardef\approxeq="3\msy@75 \mathchardef\succapprox="3\msy@76
\mathchardef\precapprox="3\msy@77
\mathchardef\curvearrowleft="3\msy@78
\mathchardef\curvearrowright="3\msy@79
\mathchardef\digamma="0\msy@7A \mathchardef\varkappa="0\msy@7B
\mathchardef\hslash="0\msy@7D \mathchardef\hbar="0\msy@7E
\mathchardef\backepsilon="3\msy@7F
\def\Bbb{\ifmmode\let\next\Bbb@\else
\def\next{\errmessage{Use \string\Bbb\space only in math mode}}\fi\next}
\def\Bbb@#1{{\Bbb@@{#1}}}
\def\Bbb@@#1{\fam\msyfam#1}
\newtheorem{definition}{\bf Definition}[section]
\newtheorem{theorem}[definition]{\bf Theorem}
\def \be {\begin{equation}}
\def \ee {\end{equation}}
\def \ba {\begin{eqnarray}}
\def \ea {\end{eqnarray}}
\def \bas {\begin{eqnarray*}}
\def \eas {\end{eqnarray*}}
\date{}
\begin{document}


\title{\bf Real Zeros of Algebraic Polynomials with Nonidentical Dependent Random Coefficients}
\author{
Sabita Sahoo\affmark[a] and
Partiswari Maharana\affmark[b,*]\\
\affaddr{\affmark[a] Faculty of Department of Mathematics,}\\
\affaddr{\affmark[b]Department of Mathematics,} 
\thanks{{Corresponding author. E-mail addresses: sabitamath@suniv.ac.in, partiswarimath1@suniv.ac.in}~ }\\
{ Sambalpur University, Odisha - 768019,
India.}
}

\date{ }
\maketitle
\begin{center}
{\bf Abstract}
\end{center}
\hspace*{.2cm} 
The expected number of real zeros of a random algebraic polynomial 
$a_0+a_1x+a_2x^2+a_3x^3+....+a_{n-1}x^{n-1}$ 
depends on the types of random coefficients, with large $n.$ In all works, the coefficients are either independent or dependent but varience of coefficients $a_i$ is one. In these cases the exepected number of real zeros is found out to be asymptotic to  $\frac{2}{\pi}logn.$ In this article, we have considered the 
negatively correlated dependent random coefficients 
$\{a_i\}_{i=0}^{n-1}$ with varience $\sigma^{2i},$ for $\sigma >1$ and coefficient of correlation  $\rho_{ij}=-\rho^{|i-j|},$ where $0<\rho<\frac{1}{3}.$ The expected number of real zeros is found to be  $\frac{2}{\pi \sigma}logn,$ which is depended on 
$\sigma.$
 
Subject Classification-: Primary 60H99; Secondary 65H99.

{\bf Key words:} Random algebraic polynomial, Dependent random variables, Real roots, Number of real zeros, Kac-Rice formula.

 \section{Introduction} 
\setcounter{equation}{0}
Study on expected number of real zeros of algebric polynomial with random coefficient started with Kac in 1940's. He considered random polynomials


\begin{equation}\label{1.1}
P_n(x)=\sum_{i=0}^{n-1}a_ix^i,
\end{equation}
where $\{a_i\}_{i=0}^{n-1}$ a sequence of random coefficients. 
Denote $N_n(a,b)$ to be the number of real zeros in the interval $(a,b).$ In the beginning work of Kac \cite{A1}, the coefficients $\{a_i\}_{i=0}^{n-1}$ were considered to be independent 
normally distributed with large $n.$ He found out that the expected number of real zeros $N_n(-\infty, \infty)$ is $\frac{2}{\pi}logn.$
Ibragimov and Maslova \cite{A2} showed that this asymptotic value remains unchanged for most of the distributions. In particular when the coefficients belong to domain of attraction of normal law, then also it has the same asymptotic value. But if the mean of the coefficients is nonzero constant then this asymptotic value reduces to half. It is observed in some cases in \cite{A3} that the expected number of real zeros $EN_n(-\infty, \infty)$ increases to $\sqrt{n},$ when the coefficients are nonidentical i.e. $var(a_i)=\binom {n}{j}.$ On the other hand this expected number of real zero increases to $\frac{n}{\sqrt{3}}$ for the random trigonometric polynomial $a_0+a_1cosx+a_2cos2x+a_3cos3x+....+a_{n-1}cos{(n-1)}x.$ This work was developed by Dunnage \cite{A4}.\\

In comparision to independent case, there is few works related to dependent case. One of the reason is that the analysis for dependent case is complicated. The work for this case was developed by Sambandham \cite{A5,A6,A7} for algebraic polynomials and  Renganathan, Sambandham \cite{A8} for trigonometric polynomials. Sambandham in his beginning work,  
considered two cases of random coefficients which are positively  dependent. In first case he showed that if the correlation $\rho_{ij}=\rho,$ (a constant) where $0<\rho<1,$ then the expected number of real zero is asymptotically equal to half of 
$EN_n(-\infty, \infty).$ For second case he considered the correlation coefficient $\rho_{ij}=\rho^{|i-j|},$ for $0<\rho<\frac{1}{2},$ and showed that this asymptotic value remain unchanged. Their is less study on algebric polynomials with negatively correlated coefficients. Newman \cite{A15} developed investigation when the coefficients are negatively correlated. This work is motivated work of Farahmand and Nezakati \cite{A16}. They have also considered algebric polynomials with random coefficients with mean zero, variance one, negetively correlated and obtained the expected number of real zeros is $\frac{2}{\pi}logn$. 
In this article we obtained real zeros of algebric polynomials with random coefficients $\{a_i\}_{i=0}^{n-1}$  in (\ref{1.1}) to be sequence of dependent random variables with mean zero, variance $(a_i)=\sigma^{2i},$ where $\sigma>1$ is any constant independent of $n,$ and correlation between any two coefficient $\rho_{ij}$ is $-\rho^{|i-j|},$ for $0<\rho<1/3.$


\section{Preliminary Analysis}
\setcounter{equation}{0}
To find expected number of real zeros we use the concept based on Kac\cite{A1} and Rice\cite{A17} results. The Kac-Rice formula to calculate the expected number of real zeros as 
\begin{equation} \label{2.2}
EN_n(a,b)=\frac{1}{\pi} \int_a^b \frac{\Delta}{A^2}dx, 
\end{equation}
where
$$
A^2=var(P_n(x)), B^2=var(P_n^{'}(x))$$
\begin{equation} \label{2.1}
C=cov(P_n(x) P_n^{'}(x)), \quad \Delta^2=A^2B^2-C^2.
\end{equation}

In order to use the Kac-Rice formula to find $EN_n(-\infty,\infty),$
it is observed that changing $x$ to  $\frac{1}{x}$ and $x$ to $-x$ leaves the distribution of the coefficients of $P_n(x)$ in  (\ref{1.1}) invariant. Hence, the expected number of real zeros in the interval 
$(-1,1)$ is asymptotically equal as in $(-\infty,-1)$ and $(1,  \infty).$ Therefore, it suffices to give the result for $EN_n(-1,1)$ only.
\section{Main result}
\setcounter{equation}{0}
In the following theorem the expected number of real zeros of algebric polynomial (\ref{1.1}) where random coefficients $a_i$ are nonidentical and negatively correlated is estimated.
\begin{theorem}
If the random variable $a_i,\{i=0,1,...., n-1 \}$ of the polynomial $P_n(x)$ are assumed to be dependent with mean zero, $var(a_i)=\sigma^{2i}$ and correlation coefficient $\rho_{ij}=-\rho^{|i-j|},$ then for all sufficiently large $n,$ the expected number of real zeros of $P_n(x)$ is 
$$
EN_n(-\infty, \infty) \sim \frac{2}{(\pi \sigma)}logn.
 $$
\end{theorem}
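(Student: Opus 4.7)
The plan is to apply the Kac--Rice formula (2.2) directly, so that the whole proof comes down to producing sharp asymptotics for $A^{2}(x)$, $B^{2}(x)$, $C(x)$ and then integrating $\Delta(x)/(\pi A^{2}(x))$ across $\mathbb{R}$. By the symmetries invoked at the end of Section~2, I would restrict attention to $EN_{n}(-1,1)$, and further use the even/odd structure of the covariance to focus on $EN_{n}(0,1)$; the missing contribution from $(1,\infty)$ is brought back in via the $x\mapsto 1/x$ reflection.

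The first computation is explicit. Writing $\mathrm{cov}(a_{i},a_{j})=\sigma^{i+j}(\delta_{ij}-(1-\delta_{ij})\rho^{|i-j|})$ and setting $u=\sigma x$, one finds
$$A^{2}(x)=\sum_{i=0}^{n-1}u^{2i}-2\sum_{k=1}^{n-1}\rho^{k}\sum_{i=0}^{n-1-k}u^{2i+k},$$
with analogous (but heavier) double sums for $B^{2}$ and $C$ obtained by term-by-term differentiation. Each geometric inner sum collapses, and for $|u|<1$ the outer sum also sums to a rational function; after simplification, up to an $O(u^{2n})$ error,
$$A^{2}\sim\frac{1-3\rho u}{(1-u^{2})(1-\rho u)},$$
and the $1-3\rho u$ factor here (and in the analogous expressions for $B^{2},C$) is precisely what forces the sharp hypothesis $0<\rho<1/3$.

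Next I would carry out a region split. Choose $\varepsilon_{n}=(\log n)/n$ (or any sequence with $\varepsilon_{n}\to 0$ and $n\varepsilon_{n}\to\infty$). On the bulk region $|x|\le 1/\sigma-\varepsilon_{n}$ the rational expressions above are bounded, so $\Delta/A^{2}=O(1)$ and its contribution to $EN_{n}$ is $O(1)$. The main contribution is a boundary layer near $|x|=1/\sigma$, where $u\to 1$: carrying out the Taylor expansions of $A^{2},B^{2},C$ there, the $\rho$-dependent prefactors should cancel in the ratio and give the classical shape $\Delta(x)/A^{2}(x)\sim 1/(1-\sigma^{2}x^{2})$. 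Then
$$\frac{1}{\pi}\int_{0}^{1/\sigma-\varepsilon_{n}}\frac{dx}{1-\sigma^{2}x^{2}}=\frac{1}{2\pi\sigma}\log n+O(1),$$
and combining the four boundary-layer contributions (at $\pm 1/\sigma$ and their reciprocals $\pm\sigma$, handled by the $x\to 1/x$ reflection) yields the claimed $(2/\pi\sigma)\log n$.

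The main obstacle will be the uniform control of the rational asymptotics inside the critical window: one must show that the factor $1-3\rho u$ in the denominator of $A^{2}$ stays bounded away from zero as $u\to 1$ (which is exactly where $\rho<1/3$ is used), and that the $\rho$-dependent factors cancel at leading order when forming $\Delta=\sqrt{A^{2}B^{2}-C^{2}}$. Book-keeping the $u^{2n}$ remainders and the finite-$n$ tails of the double sums throughout the boundary layer — to verify they do not leak into the leading $\log n$ coefficient — is the most delicate part of the computation, and is what will pin down both the constant $2/(\pi\sigma)$ and the necessity of the correlation hypothesis.
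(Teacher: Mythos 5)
Your overall strategy coincides with the paper's: apply the Kac--Rice formula, collapse the covariance double sums for $A^{2},B^{2},C$ into rational functions of $u=\sigma x$, and extract the main term $\frac{1}{2\pi\sigma}\log n$ from the logarithmically divergent integral of $\Delta/A^{2}\sim (1-\sigma^{2}x^{2})^{-1}$ over a window ending at distance of order $\log n/n$ from $1/\sigma$ (the paper's $\epsilon=n^{-a}$ is exactly $10\log n/n$, essentially your $\varepsilon_{n}$). Your leading-order expression for $A^{2}$ agrees with the paper's equation (3.8), and you correctly identify that $\rho<1/3$ is what keeps the factor $1-3\rho u$ positive up to $u=1$.

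As written, though, the proposal omits the parts of the argument that occupy most of the paper's proof. First, your claim that on $|x|\le 1/\sigma-\varepsilon_{n}$ the rational expressions are bounded so that $\Delta/A^{2}=O(1)$ is inconsistent with your own next display: $\Delta/A^{2}$ grows like $(1-\sigma^{2}x^{2})^{-1}$, which is of size $n/\log n$ at the right end of that region, and that region carries the entire main term. More importantly, the closed forms for $A^{2},B^{2},C$ carry factors of $(\rho-x\sigma)$ in their denominators, so $\Delta/A^{2}$ has an apparent singularity at the interior point $x=\rho/\sigma$; the paper spends three separate subintervals (up to, around, and beyond $\rho/\sigma$, with a crude bound $\Delta/A^{2}\le B/A$ inside a $2/(n\sigma)$-window) showing this point contributes only $O(1)$, and your sketch does not mention it. Second, the assertion that the $\rho$-dependent prefactors cancel to give $\Delta/A^{2}\sim(1-\sigma^{2}x^{2})^{-1}$ in the critical window is the crux of the whole computation (the paper's $K_{1}(\rho,x,\sigma)\to 1$), and you defer it entirely. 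Third, since $\sigma>1$ the subinterval $\left(\frac{1-\epsilon}{\sigma},1\right)$ lies inside $(0,1)$ and must be bounded separately (the paper obtains $O((\log n)^{1/2})$ there via $\Delta/A^{2}\le M\sqrt{n/(1-x\sigma)}$); your $x\mapsto 1/x$ reflection does not dispose of it, and indeed with variances $\sigma^{2i}$ the reflected polynomial has variances proportional to $\sigma^{-2j}$, so the invariance you (like the paper) invoke for $(1,\infty)$ requires justification rather than being automatic.
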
 
\begin{proof}
From the assumptions of theorem for the distributions of the coefficients of $P_n(x),$ from (\ref{2.1}) we can obtain   
\begin{equation} \label{3.1}
A^2=\sum_{i=0}^{n-1} x^{2i}\sigma^{2i}-\sum_{i=0}^{n-1}\sum_{\substack{j=0\\ j\neq i}}^{n-1}\rho^{|i-j|}x^{i+j}\sigma^{i+j},
\end{equation}
\\
\begin{equation} \label{3.2}
B^2=\sum_{i=0}^{n-1} i^2 x^{2i-2} \sigma^{2i}-\sum_{i=0}^{n-1}\sum_{\substack{j=0 \\ j\neq i }}^{n-1} ij \rho^{|i-j|}x^{i+j-2}\sigma^{i+j},
\end{equation}
\\
\begin{equation} \label{3.3}
C=\sum_{i=0}^{n-1} i x^{2i-1} \sigma^{2i}-\sum_{i=0}^{n-1}\sum_{\substack{j=0\\ j \neq i}}^{n-1} i \rho^{|i-j|}x^{i+j}\sigma^{i+j}.
\end{equation}
As the values of $A^2, B^2$ and $C$ are obtained in \cite{A16} by the method of Sambandham \cite{A5}, following  we obtain the values of the above identities as\\
$$A^2= \frac{1-x^{2n}\sigma^{2n}}{1-x^{2}\sigma^{2}}-
\frac{\rho({\rho x\sigma}-{(\rho x\sigma)^n})}{(\rho-x\sigma)(1-\rho x\sigma)}
+\frac{\rho({( x\sigma)}^2-{( x\sigma)^{2n}})}{(\rho-x\sigma)(1-x^{2}\sigma^{2})}$$
\begin{equation} \label{3.4}
-\frac{\rho x\sigma({1-{( x\sigma)^{2n-2}})}}{(1-\rho x\sigma)(1-x^{2}\sigma^{2})}
+\frac{\rho^2 x^n \sigma^n ({\rho^{n-1}}-{( x\sigma)^{n-1}})}{(\rho-x\sigma)(1-\rho x\sigma)},
\end{equation}
\\
 $$B^2= \frac{\sigma^2(1+x^{2}\sigma^{2})(1-x^{2n}\sigma^{2n})
-n^2x^{2n-2}\sigma^{2n}(1-x^2\sigma^2)^2-2nx^{2n}\sigma^{2n+2}(1-x^{2}\sigma^{2})}{(1-x^{2}\sigma^{2})^3}$$
$$ - \frac{\rho[\rho \sigma^2-n \rho^n x^{n-1}\sigma^{n+1}+(n-1)\rho^{n+1}x^n \sigma^{n+2}]}{(\rho-x\sigma)^2(1-\rho x\sigma)^2}
+\frac{\rho[x \sigma^3-nx^{2n-1}\sigma^{2n+1}+(n-1)x^{2n+1}\sigma^{2n+3}]}{(\rho-x\sigma)^2(1-x^{2}\sigma^{2})^2}$$ 
$$+\frac{\rho \sigma^2[(1-x^{2}\sigma^{2})(1-n^2x^{2n-2}\sigma^{2n}...)+2x \sigma(x \sigma-x^{2n+1}\sigma^{2n+3}+...)]}{(\rho-x\sigma)(1-x^{2}\sigma^{2})^2}$$
$$-\frac{\rho[x \sigma^3- x^{2n-1}\sigma^{2n+1}+(n-1)x^{2n-1}\sigma^{2n+1}-(n-1)x^{2n-3}\sigma^{2n+1}]}{(1-\rho x\sigma)^2(1-x^{2}\sigma^{2})^2}$$
$$-\frac{\rho x \sigma^3[(1-x^{2}\sigma^{2})(1-nx^{2n-2}\sigma^{2n}...)+2x \sigma(x \sigma- x^{2n-1}\sigma^{2n+1}...)]}{(1-\rho x\sigma)(1-x^{2}\sigma^{2})^3}$$ 
\begin{equation}\label{3.5}
+\frac{[(1-\rho x \sigma)(n\rho^2x^{n-1}\sigma^{n+1})+\rho^{3}x^n\sigma^{n+2}]
[\rho^{n-1}\sigma^2+(n-2)x^{n-1}\sigma^{n+1}-(n-1)\rho x^{n-2}]}
{(1-\rho x\sigma)^2(\rho-x\sigma)^2},
\end{equation}
\\
$$C=\frac{x\sigma^2-x^{2n+1}\sigma^{2n+2}-nx^{2n-1}\sigma^{2n}+nx^{2n+1}\sigma^{2n+2}}{(1-x^{2}\sigma^{2})^2}$$
$$-\frac{\rho(\rho \sigma-n \rho^nx^{n-1} \sigma^n+(n-1)\rho^{n+1}x^n \sigma^{n+1})}{(\rho-x\sigma)(1-\rho x\sigma)^2}$$
$$+\frac{\rho[x \sigma^2-x^{2n+1} \sigma^{2n+2}-nx^{2n-1}\sigma^{2n}+nx^{2n+1}\sigma^{2n+2}]}{(\rho-x\sigma)(1-x^{2}\sigma^{2})^2}$$
$$-\frac{\rho x \sigma[x\sigma^2-x^{2n-1}\sigma^{2n} +(n-1)x^{2n-1}\sigma^{2n}-(n-1)x^{2n-3}\sigma^{2n-2}]}{(1-\rho x\sigma)(1-x^{2}\sigma^{2})^2}$$
\begin{equation} \label{3.6}
+ \frac{\rho^2 x^n \sigma^{n+1}[\rho^{n-1}-x^{n-1}\sigma^{n-1}+(n-1)x^{n-1}\sigma^{n-1}-(n-1)\rho x^{n-2}\sigma^{n-2}]}{(1-\rho x\sigma)(\rho-x\sigma)^2}. 
\end{equation}

Consider
\begin{equation} \label{3.7}
\epsilon=n^{-a}, \quad where \quad
a=1-\frac{loglogn^{10}}{logn} \quad and \quad 0<a<1. 
\end{equation}

Let us first assume $x \in (0,1)$ and divide this interval into two subintervals $(0, \frac{1-\epsilon}{\sigma})$ and $(\frac{1-\epsilon}{\sigma},1).$ 

For $0\leq x\leq \frac{1-\epsilon}{\sigma},$ and all sufficiently large $n,$ it can be show that $$x^n\sigma^n \leq n^{-10}.$$
From the above values in (\ref{3.4})-(\ref{3.6}), we can derive 
\begin{equation}\label{3.8}
A^2=\frac{(1-3\rho x \sigma)[1+O(n^{20})]}{(1-x^2\sigma^2)(1-\rho x \sigma)},
\end{equation} 
 \begin{equation}\label{3.9}
 B^2=\frac{[(1+x^2 \sigma^2)(1-\rho x \sigma)\{2\rho \sigma^2(1+x^2 \sigma^2)-x \sigma^3(1+3 \rho^2)\}
-\rho \sigma^2(1-x^2\sigma^2)^2(1+\rho x \sigma)][1+O(n^{-18})]}{(1-x^2\sigma^2)(1-\rho x \sigma)^2(\rho-x\sigma)^2},
 \end{equation}
\begin{equation} \label{3.10}
C=\frac{(x \sigma^2+3\rho^2 x^3 \sigma^4-3\rho x^2 \sigma^3-\rho \sigma)[1+O(n^{-9})]}{(1-\rho x \sigma)^2(1-x^2\sigma^2)^2}.
\end{equation}
From (\ref{2.1}) and then using (\ref{3.4})-(\ref{3.6}) we can see that 
\begin{equation}\label{3.11}
\frac{\Delta}{A^2}=\frac{\sqrt{A^2B^2-C^2}}{A^2} 
\sim \sqrt{\frac{K(\rho,x,\sigma)}{(\rho-x\sigma)(1-x^2\sigma^2)^2}}
=\sqrt{\frac{|K(\rho,x,\sigma)|}{|(\rho-x\sigma)|(1-x^2\sigma^2)^2}},
\end{equation} 
where 
$$K(\rho, x,\sigma)=\frac{(1+x^2 \sigma^2)(1-\rho x \sigma)\{2\rho \sigma^2(1+x^2 \sigma^2)-x \sigma^2(1+3 \rho^2)\}-\rho \sigma^2(1-x^2 \sigma^2)^2(1+\rho x \sigma)}{(1-\rho x \sigma)(1-3\rho x \sigma)}$$
\begin{equation}\label{3.12}
-\frac{(\rho-x \sigma)(x \sigma^2-3 \rho x^2 \sigma^3+3 \rho^2x^3 \sigma^4-\rho \sigma)}{(1-\rho x \sigma)^2(1-3\rho x \sigma)^2}.
\end{equation}

For evaluating $\frac{\Delta}{A^2}$ and $K(\rho, x, \sigma),$ different method are used in the neighbourhood of $x=\frac{\rho}{\sigma}$ and out side this neighbourhood. We obtain $EN_n$
in the intervals $[0, \frac{\rho}{\sigma}-\frac{1}{n\sigma}], [\frac{\rho}{\sigma}-\frac{1}{n\sigma}, \frac{\rho}{\sigma}+\frac{1}{n\sigma}], [\frac{\rho}{\sigma}+\frac{1}{n\sigma},\frac{1}{2\sigma}],[\frac{1}{2\sigma},\frac{1-\eta}{\sigma}], [\frac{1-\eta}{\sigma},\frac{1-\epsilon}{\sigma}],[\frac{1-\epsilon}{\sigma},1]$ 
where 
\begin{equation}\label{3.13}
\eta=exp\{-(log n)^{\frac{1}{3}}\}.
\end{equation} 

When $0\leq x \leq\frac{\rho}{\sigma}-\frac{1}{n\sigma} ,$ from (\ref{3.11}) and (\ref{3.12}), $K(\rho, x ,\sigma)$ is bounded and $(1-x^2\sigma^2)$ is bounded away from zero. Therefore,
$$\frac{\Delta}{A^2}< \frac{M_0}{(\rho-x \sigma)^{\frac{1}{2}}},$$ 
where $M_0$ is an absolute constant. All $M's$ in the following are also absolute constants.
From (\ref{2.2}), and for all sufficiently large $n,$ we have 
\begin{equation}\label{3.14}
EN_n\left(0,\frac{\rho}{\sigma}-\frac{1}{n\sigma}\right)=O(1).
\end{equation}
Now, assume that $\frac{\rho}{\sigma}-\frac{1}{n\sigma} \leq x \leq \frac{\rho}{\sigma}+\frac{1}{n\sigma}.$ From (\ref{2.1}), (\ref{3.2}) and (\ref{3.8}) we have
 $$
\frac{\Delta}{A^2} <  \left\lbrace\frac{B^2}{A^2}
\right\rbrace^2 < \left\lbrace\frac{ \sum_{i=0}^{n-1}i^2 x^{2i-2}\sigma^{2i}}{ \{(1-3\rho  x \sigma)/[(1-\rho x \sigma)(1-x^2 \sigma^2)]\}\{1+O(n^{-20})\}}\right\rbrace^{\frac{1}{2}}
 $$ 
$$< n \left(\frac{1-\rho x \sigma}{1-3\rho x \sigma}\right)^{\frac{1}{2}}.$$
However, $\left(\frac{1-\rho x \sigma}{1-3\rho x \sigma}\right)^{\frac{1}{2}}$
is bounded. Therefore from (\ref{2.2}) we obtain
\begin{equation}\label{3.15}
EN_n\left(\frac{\rho}{\sigma}-\frac{1}{n\sigma},\frac{\rho}{\sigma}+\frac{1}{n\sigma}\right)=O(1).
\end{equation}

When $\frac{\rho}{\sigma}+\frac{1}{n\sigma} \leq x \leq \frac{1}{2\sigma},$ we find that $K(\rho, x, \sigma)$ and $(1-x^2 \sigma^2)$ 
are both bounded away from zero and 
$$\frac{\Delta}{A^2}< \frac{M_1}{(x \sigma- \rho)^{\frac{1}{2}}}.$$
Therefore, from (\ref{2.2}),
\begin{equation}\label{3.17}
EN_n\left(\frac{\rho}{\sigma}+\frac{1}{n\sigma},\frac{1}{2\sigma}\right)=\frac{M_1}{\pi}\int_{\frac{\rho}{\sigma}+\frac{1}{n\sigma}}^{\frac{1}{2\sigma}}\frac{dx}{(x \sigma- \rho)^{\frac{1}{2}}}=O(1).
\end{equation}

When $\frac{1}{2\sigma}\leq x \leq \frac{1-\eta}{\sigma},$ we obtain 
$\frac{K(\rho, x, \sigma)}{(\rho- x \sigma)}$ is bounded. Then
$$\frac{\Delta}{A^2}< \frac{M_2}{(1-x^2 \sigma^2)}, $$
and hence from (\ref{2.2}) 
\begin{equation}\label{3.18}
EN_n {\left(\frac{1}{2\sigma},{\frac{1-\eta}\sigma} \right)}=\frac{M_2}{\pi}\int_{\frac{1}{2\sigma}}^{\frac{1-\eta}\sigma}\frac{dx}{(1- x^2 \sigma^2)}=O(logn)^{\frac{1}{3}}.
\end{equation}

Now, let $\frac{1-\eta}{\sigma} \leq x \leq \frac{1-\epsilon}{\sigma}.$ From (\ref{3.11}) we get 
\begin{equation}\label{3.19}
\frac{\Delta}{A^2}< \frac{K_1(\rho, x, \sigma)}{(1-x^2 \sigma^2)},
\end{equation}
where $K_1(\rho, x, \sigma)=\sqrt  {\frac{K(\rho, x, \sigma)}{(\rho- x\sigma)}}.$
 In this interval, it can be show that 
\begin{equation}\label{3.20}
K_1^*(\rho,\eta, \epsilon, \sigma)\leq K_1(\rho,\eta, \epsilon, \sigma) \leq K_1^{**}(\rho,\eta, \epsilon, \sigma),
\end{equation}
where $K_1^{**}(\rho,\eta, \epsilon, \sigma)$ and $K_1^*(\rho,\eta, \epsilon, \sigma)$ are $K_1(\rho,x, \sigma)$
 when $x$ is substituated by $\frac{1-\epsilon}{\sigma}$ and $\frac{1-\eta}{\sigma}$ respectively. Therefore from (\ref{2.2}), (\ref{3.7}), (\ref{3.13}) and  (\ref{3.19}) we get,\\
$$\frac{aK_1^{**}(\rho,\eta, \epsilon, \sigma)}{2\pi \sigma}logn+ O(logn)^{\frac{1}{3}} \leq EN_n\left({\frac{1-\eta}\sigma},{\frac{1-\epsilon}\sigma}\right) \leq \frac{aK_1^*(\rho,\eta, \epsilon, \sigma)}{2\pi \sigma}logn+ O(logn)^{\frac{1}{3}},$$\\
since 
$a \rightarrow 1,\quad {K_1^{**}(\rho,\eta, \epsilon, \sigma)}\rightarrow 1, \quad K_1^*(\rho,\eta, \epsilon, \sigma)\rightarrow 1$ as $n \rightarrow \infty.$
Now, we obtain  
\begin{equation}\label{3.21}
EN_n\left(\frac{1-\eta}{\sigma},\frac{1-\epsilon}{\sigma}\right)=\frac{1}{2\pi\sigma}logn+O(logn)^{\frac{1}{3}.}
\end{equation}
Finally, letting $\frac{1-\epsilon}{\sigma} \leq x \leq 1,$ from (\ref{3.4})-(\ref{3.6}), we have
$$
\frac{(1-3\rho x \sigma)(1-x^{2n}\sigma^{2n})}{(1-\rho x \sigma)(1-x^2\sigma^2)}+ O(\rho^n)< A^2 \leq \sum_{i=0}^{n-1}x^{2i}\sigma^{2i},
$$ 
and 
$$
B^2\leq \sum_{i=0}^{n-1} i^2 x^{2i-2}\sigma^{2i}.
$$
Now,  
$$
\sum_{i=0}^{n-1}\sum_{{j=0},{j\neq i}}^{n-1} \rho^{|i-j|} i x^{i+j-1} \sigma^{i+j}$$
$$=\frac{2n\rho x^{2n-1}\sigma^{2n}}{(\rho-x \sigma)(1-x^2 \sigma^2)}
+\frac{[\rho x^2 \sigma^3(\rho- x\sigma)-\rho x \sigma^2(1-\rho x \sigma)](1-x^{2n}\sigma^{2n})}{(\rho-x \sigma)(1-x^2 \sigma^2)^2(1-\rho x \sigma)}$$
\begin{equation}\label{3.22}
+\frac{[\rho^2 \sigma(\rho-x \sigma)+\rho x^{2n}\sigma^{2n+1}(1-\rho x \sigma)]}{(1-\rho x \sigma)^2(\rho-x \sigma)^2}
+O(\rho^n).
\end{equation}
where 
$\frac{2n\rho x^{2n-1}\sigma^{2n}}{(\rho-x \sigma)(1-x^2 \sigma^2)}< 0.$
Then, 
$$\sum_{i=0}^{n-1}\sum_{{j=0},{j\neq i}}^{n-1} \rho^{|i-j|}i x^{i+j-1} \sigma^{i+j} \leq M_4\frac{(1-x^{2n}\sigma^{2n})}{(1-x^{2}\sigma^{2})^2}.$$
By using the similar method to Farahmand \cite[p.~32]{A18},  we see that for large $n$
\begin{equation} \label{3.22.1}
\frac{\Delta}{A^2} \leq M_5 \sqrt{\frac{n}{1-x \sigma}}. 
\end{equation}
Hence, 
\begin{equation} \label{3.23}
EN_n\left(\frac{1-\epsilon}{\sigma}, 1\right)=O(log n)^{\frac{1}{2}}.
\end{equation}
Therefore, from (\ref{3.14})-(\ref{3.23}), we have 
\begin{equation} \label{3.24}
EN_n(0,1)=\frac{1}{2 \pi \sigma}log n+O(log n)^{\frac{1}{3}}.
\end{equation}
Now, when $x \in [-1,0],$ the same result occur in the interval.
For all sufficiently large $n,$  when $\frac{-1+ \epsilon}{\sigma} \leq x \leq 0$ and $\frac{-1+\eta}{\sigma} \leq x \leq 0,$  $\frac{\Delta}{A^2}$ is similar to (\ref{3.11}) and we have 
\begin{equation} \label{3.25}
EN_n\left(\frac{-1+\eta}{\sigma},0\right)=O(log n)^{\frac{1}{3}}.
\end{equation}
Again, when $\frac{-1+\epsilon}{\sigma} \leq x \leq \frac{-1+\eta}{\sigma},$ simialr to (\ref{3.21}), we get 
\begin{equation} \label{3.26}
EN_n\left(\frac{-1+\epsilon}{\sigma},\frac{-1+\eta}{\sigma}\right)=\frac{1}{2 \pi \sigma}log n+ O(log n)^{\frac{1}{3}}.
\end{equation}

Now, for $-1 \leq x \leq \frac{-1+\epsilon}{\sigma},$
the $var(\sum_{i=0}^{n-1}a_i |x|^i \sigma^{2i}) \geq 0.$
Hence $$|\sum_{i=0}^{n-1}\sum_{j=0}^{n-1} \rho
^{|i-j|}{ x^{i+j} \sigma^{i+j}}| \leq \sum_{i=0}^{n-1} x^{2i} \sigma^{2i},$$
and from (\ref{3.1}), we get 
\begin{equation} \label{3.27}
A^2 \leq 2 \sum _{i=0}^{n-1} x^{2i} \sigma^{2i}.
\end{equation}
Similarly by (\ref{3.27}), we have $$B^2 \leq 2 \sum _{i=0}^{n-1} i^2 x^{2i-2} \sigma^{2i}.$$ 
This gives that 
$$\Delta^2 < 4[(\sum _{i=0}^{n-1} x^{2i} \sigma^{2i})(\sum _{i=0}^{n-1} i^2 x^{2i-2} \sigma^{2i})-(\sum _{i=0}^{n-1} i x^{2i-1} \sigma^{2i})^2]$$
\begin{equation} \label{3.28}
+ 3(\sum _{i=0}^{n-1} i x^{2i-1} \sigma^{2i})^2+ 2(\sum _{i=0}^{n-1} i x^{2i-1} \sigma^{2i})(\sum_{i=0}^{n-1}\sum_{j=0}^{n-1} 
\rho^{|i-j|}{i x^{i+j-1} \sigma^{i+j}}).
\end{equation}

Also, for $x$ belonging to this interval, we have 
$$\sum_{i=0}^{n-1}\sum_{j=0}^{n-1} \rho
^{|i-j|} x^{i+j} \sigma^{i+j}= \frac{2 \rho^2 x(\sigma- x^{2n}\sigma^{2n})-2 \rho x^2 (\sigma^2- x^{2n-2}\sigma^{2n-2})}{(1-x^2 \sigma^2)(\rho- x \sigma)(1- \rho x  \sigma)}+O(\rho^n)<0.
$$
From (\ref{3.1}), we obtain
$$A^2 > \sum _{i=0}^{n-1} x^{2i} \sigma^{2i}.$$
Using similar way of (\ref{3.22.1}), from (\ref{3.28}), we can show that 
$$\frac{\Delta}{A^2} \leq M_6 \sqrt{\frac{n}{1+x \sigma}},$$
and therefore 
\begin{equation} \label{3.29}
EN_n\left(-1,\frac{-1+\epsilon}{\sigma}\right)=O(log n)^{\frac{1}{2}}.
\end{equation}
Hence, from (\ref{3.25}), (\ref{3.26}) and (\ref{3.29}), we get 
\begin{equation} \label{3.30}
EN_n(-1,0)=\frac{1}{2 \pi \sigma} log n + O(log n)^{\frac{1}{2}}.
\end{equation}
Hence finally, from (\ref{3.24}) and (\ref{3.30}), we get
$$
EN_n(-\infty, \infty)=\frac{2}{\pi \sigma}log n+ O(log n)^{\frac{1}{2}},
$$ 
which completes proof of the theorem.\\
\end{proof}

\textbf{Acknowledgments}\\
This research work was supported by University Grant Comission (National Fellowship with letter no-F./2015-16/NFO-2015-17-OBC-ORI-33062).

\end{document}